\documentclass{article}
\usepackage{amsfonts}
\usepackage{amsmath}

\newtheorem{theorem}{Theorem}

\newtheorem{corollary}[theorem]{Corollary}

\newtheorem{definition}[theorem]{Definition} 
\newtheorem{example}[theorem]{Example}

\newtheorem{proposition}[theorem]{Proposition}
\newtheorem{remark}[theorem]{Remark}

\newenvironment{proof}[1][Proof]{\noindent\textbf{#1.} }{\ \rule{0.5em}{0.5em}}

\begin{document}
	\title{On ruled normal surfaces associated with rectifying curves}
	\author{Ana-Maria Boldeanu}
	\date{}
	\maketitle
	
	\begin{abstract}
		In this paper we study the geometrical entities as Gauss and mean curvatures attached to the ruled normal surfaces associated with rectifying curves. A particular and special example of rectifying curves is also investigated, via its attached ruled normal surface. Our main result is the proof of the non-existence of rectifying curves that are simultaneously Bertrand curves.
	\end{abstract}
	
	\textbf{Mathematics Subject Classification (2020):} 53A04, 53A05.  
	
	\textbf{Key words and phrases}: Bertrand curves, rectifying curves, ruled normal surfaces, Gauss curvature, mean curvature.
	
	\section{Introduction}
	
	In differential geometry, a rectifying curve is a spatial curve characterized by the property that all its rectifying planes intersect at a single fixed point. When this point coincides with the origin of a Cartesian coordinate system, the position vector of the curve lies entirely within its rectifying plane. A characteristic of these curves is that the ratio of their torsion to their curvature varies as a nonconstant linear function of the arc length parameter.
	
	The term \textit{rectifying curve} was introduced by Chen in \cite{Chen} to designate a curve whose position vector always lies in its rectifying plane. Kim et al. \cite{Kim-Chu-Cho} studied the properties of the “space curves satisfying \( \tau/\kappa=as+b \)”. They proved that a curve has this property if and only if there exists a point such that all of the rectifying planes to the curve pass through this point. By a translation, any curve of this type turns out to be a rectifying curve in the restricted sense of \cite{Chen}. In this paper, we consider, for a rectifying curve, the definition from \cite{Kim-Chu-Cho} because it employs only the intrinsic properties of the curve. Generally, the geometric properties of rectifying curves are detailed presented in \cite{Chen}, \cite{Jianu}.
	
	The main aim oh this paper is to investigate the possible connections between rectifying and Bertrand curves in order to obtain the more precisely descriptions of these curves. Consequently, we obtained that there is no rectifying curve which is at the same time a Bertrand curve. In this direction, the outline of the paper is the following: in Section 2 we present the basic notions and formulas that characterize the Bertrand and rectifying curves; in Section 3 we investigate in detail the geometrical entities associated with a particular curve which is very suggestive for rectifying curves. We also prove that the sets of rectifying curves and Bertrand curves are disjoint ones, via the minimality of the ruled normal surfaces associated with a spatial curve (see \cite{Izu-Take}).

	
	\section{Preliminaries}
	
	We now review some basic concepts on classical geometry of space curves and ruled surfaces in Euclidean space. We denote by $\left< \; ,\right>$ the inner product of the Euclidean space. Let $I$ be a real interval and $\gamma : I \rightarrow \mathbb{R}^3$ be a space curve of class $C^{\infty}$ with $\gamma'(t) \neq 0$, where $\gamma'(t)=\dfrac{d\gamma}{dt}(t)$. We also denote the norm of $\gamma$ by $\left\| \gamma \right\|$. The \textit{arc-lenght} of a curve $\gamma$, measured from $\gamma(t_0), \; t_0 \in I$ is 
	\[
	s(t)=\int_{t_0}^{t} \left\|\gamma'(\sigma)\right\| d\sigma.
	\]
	Then a parameter $s$, $s\in [0,L]$, where $L$ is the \textit{length} of the curve, is determined such that $\left\|\gamma'(s) \right\|=1$. Generally, we say that a curve $\gamma$ is \textit{parametrized by the arc-lenght} if it satisfies $\left\|\gamma'(s) \right\|=1$. The \textit{unit} vector field $T(s)=\gamma'(s)$ is called the \textit{tangent vector} of $\gamma$ at $s$. We define the  \textit{curvature} of the curve $\gamma$ by $\kappa(s)=\left\| \gamma ''(s)\right\|$. If $\kappa(s) \neq 0$, then the \textit{unit principal normal vector} $N(s)$ of the curve $\gamma$ at $s$ is given by $\gamma ''(s)=\kappa(s)N(s)$. Since $\left< T(s), T(s)\right> =1$, $\forall \; s \in [0,L]$, it follows that $\left< T'(s), T(s)\right> =0$; hence, $N(s)$ is orthogonal to $T(s)$. The \textit{unit} vector $B(s)=T(s) \times N(s)$ is called the \textit{binormal vector} of the curve $\gamma$ at $s$, which is a vector field orthogonal to both $T(s)$ and $N(s)$. The orthogonal unit vectors $T(s), N(s)$ and $B(s)$ form the well-known \textit{Fr\'{e}net frame} or \textit{Fr\'{e}net trihedron}.
	
	It is a well-known fact that the following \textit{Fr\'{e}net-Serret} formula holds:
	\begin{equation} \label{F-S}
		\begin{matrix} 
			T'(s) =	& \kappa(s) N(s)  \medskip\\
			N'(s) =	& -\kappa(s)T(s)+\tau(s)B(s)  \medskip\\
			B'(s) =	& -\tau(s)N(s),  \\
		\end{matrix}
	\end{equation}
	where the \textit{torsion} of the curve $\gamma$ at $s$ is 
	$$\tau(s)=\dfrac{det(\gamma'(s),\gamma''(s),\gamma'''(s))}{ \left\|\gamma'(s) \times \gamma''(s)  \right\|^2}.$$
	We recall that, at each point of the curve, the planes determined by $\left\{T(s),N(s) \right\}$, $\left\{T(s),B(s) \right\}$ and $\left\{N(s),B(s) \right\}$ are known as the \textit{osculating plane}, the \textit{rectifying plane} and the \textit{normal plane}, respectively. 
	
	\begin{definition}
		\emph{(\textbf{\cite{Naka-Taka}})}. The curves $\gamma$ and $\bar{\gamma}$ are \textbf{ Bertrand mates} if there exists a smooth function $\lambda :I \rightarrow \mathbb{R}$ such that $\bar{\gamma}(t)=\gamma(t)+\lambda(t)N(t)$ and $N(t)=\pm\bar{N}(t)$, for all $t \in I$. Moreover, we say that $\gamma : I \rightarrow \mathbb{R}^3 $ is a \textbf{Bertrand curve} if there exists another non-degenerate curve $\bar{\gamma} : I \rightarrow \mathbb{R}^3 $ such that $\gamma$ and $\bar{\gamma}$ 
		are Bertrand mates.
		
	\end{definition}
	
	\begin{theorem}
		\emph{(\textbf{\cite[p.~96]{Lip}})}. A curve $\gamma : I \rightarrow \mathbb{R}^3$ is a \textbf{Bertrand curve} if and only if there exist nonzero real numbers $A, B$ such that
		\[
		A\kappa(s)+B\tau(s)=1, \; \forall \; s \in [0,L].
		\]
	\end{theorem}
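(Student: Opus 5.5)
The plan is to prove both directions directly from the Fr\'{e}net--Serret formulas (\ref{F-S}), writing everything in terms of the arc-length $s$ of $\gamma$. For the direct implication, suppose $\gamma$ and $\bar{\gamma}=\gamma+\lambda N$ are Bertrand mates. Differentiating with respect to $s$ gives
\[
\bar{\gamma}'(s)=(1-\lambda\kappa)T+\lambda' N+\lambda\tau B .
\]
Since $\bar{N}=\pm N$ and $\bar{\gamma}'$ is tangent to $\bar{\gamma}$, we have $\langle \bar{\gamma}',N\rangle=0$. Hence $\lambda'=0$, so $\lambda$ is a constant, and it is nonzero because the mate is distinct from $\gamma$.

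Next, write the unit tangent of $\bar{\gamma}$ as $\bar{T}=\cos\theta\, T+\sin\theta\, B$. Differentiating $\langle \bar{T},T\rangle=\cos\theta$, and using $T'=\kappa N \perp \bar{T}$ together with $\bar{T}'\parallel \bar{N}=\pm N\perp T$, shows that $\theta$ is constant. Since $\bar{\gamma}'$ is parallel to $\bar{T}$, we get
\[
(1-\lambda\kappa)\sin\theta=\lambda\tau\cos\theta .
\]
If $\sin\theta\neq0$, this reads $\lambda\kappa+\lambda\cot\theta\,\tau=1$. So we set $A=\lambda$ and $B=\lambda\cot\theta$, both of which are constants.

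The main obstacle is the degenerate cases. If $\sin\theta=0$, then $\lambda\tau=0$, so $\tau\equiv0$ and $\gamma$ is planar. If $\cos\theta=0$, then $\lambda\kappa=1$, so $\kappa$ is constant. In both cases $B=0$ or $\tau=0$, which conflicts with the requirement that $A,B$ be nonzero. I would treat these by interpreting the statement as concerning non-planar curves with $\tau\neq0$, or by noting that the hypothesis implicitly excludes such cases, as in \cite{Lip}. For circular helices one can choose other mates to get $B\neq0$. In the generic case, the assumption that $\bar{\gamma}$ is non-degenerate also keeps $1-\lambda\kappa$ and $\tau$ from vanishing identically.

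For the converse, assume $A\kappa+B\tau=1$ with $A,B\neq0$. Define $\bar{\gamma}=\gamma+AN$. Then
\[
\bar{\gamma}'=(1-A\kappa)T+A\tau B=\tau\,(BT+AB)\!,
\]
that is, $\bar{\gamma}'=\tau(B\,T+A\,B(s))$, where $B(s)$ denotes the binormal vector and the scalar $B$ is the constant. So $\bar{T}$ is proportional to the constant combination $B\,T+A\,B(s)$, and this holds wherever $\tau\neq0$, which keeps $\bar{\gamma}$ non-degenerate. Differentiating this combination gives $(B\kappa-A\tau)N$. Therefore $\bar{T}'$ is parallel to $N$, so $\bar{N}=\pm N$, and $\gamma$ and $\bar{\gamma}$ are Bertrand mates. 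Finally, I would check that $B\kappa-A\tau\neq0$, so that $\bar{\kappa}\neq0$. If this quantity vanished identically, then combined with $A\kappa+B\tau=1$ it would force $\kappa$ and $\tau$ to be constant, which is the helix case handled as above.
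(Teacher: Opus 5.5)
The paper gives no proof of this statement; it only quotes it from Lipschutz. Your argument is the standard classical one, and its generic part is sound. You correctly show that $\lambda$ and $\theta$ are constant, and for $\sin\theta\cos\theta\neq0$ you correctly obtain $A=\lambda$, $B=\lambda\cot\theta$, both nonzero. In the converse, $\bar\gamma'=\tau\,(B\,T+A\,B(s))$ and $(B\,T+A\,B(s))'=(B\kappa-A\tau)N$ are both right. Excluding the planar case $\sin\theta=0$ by assuming $\tau\neq0$ is also justified: a plane curve of nonconstant curvature is Bertrand via its parallel curves, yet satisfies no relation $A\kappa+B\tau=1$.

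The gap is the case $\cos\theta=0$, which you dispose of as the circular-helix case. It is not that case. $\cos\theta=0$ only forces $\lambda\kappa\equiv1$, i.e.\ constant curvature, and leaves $\tau$ completely free. In fact every curve with constant curvature $\kappa_0$ and nowhere-vanishing torsion is a Bertrand curve. For $\bar\gamma=\gamma+\kappa_0^{-1}N$ one gets $\bar\gamma'=\kappa_0^{-1}\tau B$, $\bar T=\pm B$, $\bar\kappa=\kappa_0$ and $\bar N=-N$. Take $\kappa\equiv1$, $\tau(s)=s$ on $[1,2]$. No constants with $B\neq0$ can satisfy $A\kappa+B\tau=1$, and this curve has no other mate. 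So the forward implication as stated, with $A,B$ both nonzero, is false, and neither the hypothesis $\tau\neq0$ nor ``choosing other mates'' rescues it. (This example is also rectifying, since $\tau/\kappa=s$, so the case is not a harmless technicality in this paper.) You should not appeal to an implicit hypothesis here; you should correct the statement.

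What your argument actually proves, with no case distinction at all, is this: for $\kappa,\tau$ nowhere zero, $\gamma$ is Bertrand iff $A\kappa+B\tau=1$ for constants $A\neq0$ and $B$. The value $B=0$ can occur exactly when $\kappa$ is constant, and your converse computation works verbatim in that case.

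A smaller point concerns the converse. Non-degeneracy of $\gamma+AN$ requires $B\kappa-A\tau\neq0$ at every point, not merely that it does not vanish identically. For a curve that is not a circular helix, the pair $(A,B)$ is unique, and by your forward argument $\gamma+AN$ is the only candidate mate. A single zero of $B\kappa-A\tau$ therefore gives that mate an inflection point, where $\bar N$ is undefined. This must be excluded by hypothesis, or the result stated locally.
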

	
	\begin{definition}
		\emph{(\textbf{\cite{Izu-Take}})}.
		The vector $D(s)=\tau(s)T(s)+\kappa(s)B(s)$, associated with an unit speed curve $\gamma:[0,L] \rightarrow \mathbb{R}^3$,  is called the \textbf{Darboux vector field} of $\gamma$.
	\end{definition} 
	Note that, using the Darboux vector field, the Fr\'{e}net-Serret formula can be rewritten as follows:
	\[
	\begin{matrix}
		T'(s) =  D(s) \times T(s)  \medskip\\
		N'(s) =  D(s) \times N(s) \medskip\\
		B'(s) =  D(s) \times B(s). \\
	\end{matrix}
	\]
	\begin{definition}
		\emph{(\textbf{\cite{Izu-Take}})}. The vector field $$
		\tilde{D}(s)=\dfrac{\tau}{\kappa}(s)T(s)+B(s)
		$$
		along $\gamma$, under the condition that $\kappa(s) \neq 0$, $ \forall \; s \in [0,L]$, is called the \textbf{modified Darboux vector field} of $\gamma$.
	\end{definition}

	\begin{definition}
		\emph{(\textbf{\cite{Izu-Take}})}. A map $F_{(\gamma, \delta)}:I \times \mathbb{R} \rightarrow \mathbb{R}^3$, defined by 
		$$F_{(\gamma, \delta)}(t,u)=\gamma(t)+u\delta(t),$$
		where $\gamma : I \rightarrow \mathbb{R}^3$, $\delta : I \rightarrow \mathbb{R}^*$ are smooth mappings and $I$ is an open interval, is called a \textbf{ruled surface} in $\mathbb{R}^3$. The curve $\gamma$ is called the \textbf{base curve} and $\delta$ is called the \textbf{director curve}. Note that the straightlines $u \mapsto \gamma(t)+u\delta(t)$ are called \textbf{rulings}.
		We also remind that a ruled surface associated to a spatial curve $\gamma$ with $\kappa(s)\neq0$, $\forall \; s \in [0,L]$, which is defined by 
		$$F_{(\gamma,\tilde{D})}(s,u)=\gamma(s)+u\tilde{D}(s),$$
		is called the \textbf{rectifying developable} of $\gamma$. Particularly, the ruled surface 
		\begin{equation} \label{Prin-Norm-Surf}
			F_{(\gamma,N)}(s,u)=\gamma(s)+uN(s),
		\end{equation}
		is called the \textbf{principal normal surface} of $\gamma$.
	\end{definition}
	
	\begin{definition} \label{def R-C}
		\emph{(\textbf{\cite{Chen}})}.  A unit curve $\gamma: [0,L] \rightarrow \mathbb{R}^3$, defined by
		\begin{equation} \label{Rec-Curve}
			\gamma(s)=\lambda(s)T(s)+\mu(s)B(s),
		\end{equation}
		where $\lambda(s), \; \mu(s)$ are some functions, is called a \textbf{rectifying curve} in $\mathbb{R}^3$.
	\end{definition}
	
	The following theorem provides the connection between the torsion and curvature that characterizes a rectifying curve.

	\begin{theorem} \label{Th R-C}
		\emph{(\textbf{\cite{Kim-Chu-Cho}})}. The unit speed curve $\gamma : [0,L] \rightarrow \mathbb{R}^3$ is a rectifying curve if and only if there exist the constants $a_1, a_2 \in \mathbb{R}, \; a_1 \neq 0$, such that
		\begin{equation} \label{R-C cond}
			\dfrac{\tau(s)}{\kappa(s)} = a_1s+a_2,\;\forall \; s\in [0,L].
		\end{equation}  
	\end{theorem}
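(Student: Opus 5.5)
We prove both directions of Theorem \ref{Th R-C} directly from the Fr\'{e}net-Serret formulas (\ref{F-S}), assuming throughout that $\kappa(s)\neq 0$, so that the Fr\'{e}net frame and the ratio $\tau/\kappa$ are defined.

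\emph{Direct implication.} Suppose $\gamma$ is rectifying, so $\gamma=\lambda T+\mu B$ as in (\ref{Rec-Curve}). Differentiating and using (\ref{F-S}) gives
\[
T=\gamma'=\lambda' T+(\lambda\kappa-\mu\tau)N+\mu' B.
\]
Comparing coefficients in the frame $\{T,N,B\}$ yields $\lambda'=1$, $\mu'=0$ and $\lambda\kappa=\mu\tau$. Hence $\lambda(s)=s+c$ for some constant $c$, and $\mu$ is a constant. The constant $\mu$ cannot vanish. Otherwise $\lambda\kappa=0$ with $\kappa\neq0$ forces $\lambda\equiv 0$ on an interval, which contradicts $\lambda'=1$. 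Therefore
\[
\frac{\tau}{\kappa}=\frac{\lambda}{\mu}=\frac{1}{\mu}\,s+\frac{c}{\mu},
\]
which is (\ref{R-C cond}) with $a_1=1/\mu\neq0$ and $a_2=c/\mu$.

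\emph{Converse.} Suppose $\tau/\kappa=a_1s+a_2$ with $a_1\neq0$. The coefficient relations above suggest looking at
\[
P(s)=\gamma(s)-\Big(s+\frac{a_2}{a_1}\Big)T(s)-\frac{1}{a_1}B(s).
\]
Its derivative is
\[
P'=T-T-\Big(s+\frac{a_2}{a_1}\Big)\kappa N+\frac{\tau}{a_1}N
=\frac{\kappa}{a_1}\Big(\frac{\tau}{\kappa}-a_1s-a_2\Big)N,
\]
which vanishes identically by hypothesis. So $P$ is a constant point $P_0$.

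It follows that $\gamma(s)-P_0$ lies in $\mathrm{span}\{T(s),B(s)\}$ for every $s$. In other words, all rectifying planes of $\gamma$ pass through $P_0$. After the translation $\gamma\mapsto\gamma-P_0$, which leaves $T,N,B,\kappa,\tau$ unchanged, we get $\gamma=\lambda T+\mu B$ with $\lambda=s+a_2/a_1$ and $\mu=1/a_1$. Thus $\gamma$ is rectifying in the sense of Definition \ref{def R-C}, and as the paper says, we adopt the translation-invariant definition of \cite{Kim-Chu-Cho}.

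\emph{Main obstacle.} The computations are routine. The one delicate point is in the direct implication: we must rule out $\mu=0$ to get $a_1\neq0$, and this relies on $\kappa\neq0$ so that the relation $\lambda\kappa=\mu\tau$ can be divided by $\kappa$.
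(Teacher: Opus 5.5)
Your proof is correct and complete. The paper gives no proof of this theorem; it is quoted from Kim et al. The only related computation in the paper is the Remark giving (\ref{New R-C}). Your coefficient comparison $\lambda'=1$, $\mu'=0$, $\lambda\kappa=\mu\tau$ is exactly what justifies that Remark, since it yields $\lambda=s+a_2/a_1$ and $\mu=1/a_1$. Your converse, showing that $\gamma-(s+a_2/a_1)T-a_1^{-1}B$ is constant, is the standard argument, essentially Chen's original one.

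You are right that the ``if'' direction only produces a curve satisfying Definition \ref{def R-C} after a translation, and this caveat is essential, not cosmetic. Suppose a nonzero vector $v$ satisfied $\langle v,N\rangle\equiv 0$ along a rectifying curve. Writing $v=\alpha T+\beta B$, the same coefficient comparison forces $\alpha,\beta$ to be constant with $\alpha\kappa=\beta\tau$. That gives either $v=0$ or $\tau/\kappa$ constant, both excluded. So a translated copy of a rectifying curve never satisfies the origin-based definition, and the translation-invariant convention the paper adopts is genuinely needed for the statement to hold.

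Two small details you could state explicitly:
\begin{itemize}
\item[1.] $\lambda=\langle\gamma,T\rangle$ and $\mu=\langle\gamma,B\rangle$ are automatically smooth, which justifies differentiating them.
\item[2.] Under the translation-invariant definition, the direct implication is the identical computation applied to $\gamma-P_0$, since $(\gamma-P_0)'=T$.
\end{itemize}
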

	
	\begin{remark}
		By Definition $\ref{def R-C}$, Theorem \ref{Th R-C} and Fr\'{e}net-Serret formulas (\ref{F-S}), we remark that a rectifying curve can be written as:
		\begin{equation} \label{New R-C}
			\gamma(s)=\left ( s + \dfrac{a_2}{a_1} \right )T(s)+\dfrac{1}{a_1}B(s),
		\end{equation}
		for any $s \in [0,L]$ and $a_1, a_2 \in \mathbb{R}, \; a_1 \neq 0$. 
	\end{remark}
	
	\section{Ruled surfaces associated with rectifying curves}
	
	In this section we discuss about the normal ruled surface associated with a rectifying curve, and we compute their Gaussian curvature and mean curvature, representing new results of this paper.
	
	Firstly, we consider the \textit{ruled normal surface associated with the rectifying curve} $\gamma(s)$ from (\ref{New R-C}). This ruled normal surface is parametrized as follows:
	\begin{equation} \label{R-N-S}
		F_{(\gamma,N)}(s,u)=\left ( s + \dfrac{a_2}{a_1} \right )T(s)+\dfrac{1}{a_1}B(s)+uN(s),
	\end{equation}
	for any $s \in [0,L]$, $u \in \mathbb{R} $ and $a_1, a_2 \in \mathbb{R}, \; a_1 \neq 0$.
	
	In the next proposition we calculate the Gaussian and mean curvatures of the ruled normal surface (\ref{R-N-S}):
	\begin{proposition}
		The curvatures of the ruled normal surface associated with the rectifying curve (\ref{New R-C}), which is parametrized by (\ref{R-N-S}), are:
		\begin{itemize}
			\item [1.] The \textbf{Gaussian curvature} $K(s,u)$ is given by
			\begin{equation} \label{G-C}
				K(s,u)=-\dfrac{\tau^2(s)}{\left [ (1-u\kappa(s))^2 + (u\tau(s))^2 \right ]^2}, \quad \forall \; s \in [0,L], u \in \mathbb{R};
			\end{equation}
			
			\item [2.] The \textbf{mean curvature} $H(s,u)$ is given by
			\begin{equation} \label{M-C}
				H(s,u)=\dfrac{u\left [ \tau'(s)+u(\kappa'(s)\tau(s)-\kappa(s)\tau'(s)) \right ]}{2\left [ (1-u\kappa(s))^2+(u\tau(s))^2 \right ]^{3/2}}, \quad \forall \; s \in [0,L], u \in \mathbb{R}. 
			\end{equation}
		\end{itemize}
	\end{proposition}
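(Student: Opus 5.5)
The plan is a direct computation with the first and second fundamental forms of $F_{(\gamma,N)}(s,u)=\gamma(s)+uN(s)$. I will use only the Fr\'{e}net-Serret formulas (\ref{F-S}) and $\gamma'=T$. The specific form (\ref{New R-C}) of the rectifying curve plays no role beyond $\gamma'=T$, so the formulas should hold for the principal normal surface (\ref{Prin-Norm-Surf}) of any unit speed curve with $\kappa\neq 0$. I will write everything in the moving frame $\{T,N,B\}$ and use its orthonormality.

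\textbf{First-order data.} I compute
\[
F_s=T+uN'=(1-u\kappa)T+u\tau B,\qquad F_u=N .
\]
This gives $E=(1-u\kappa)^2+(u\tau)^2$, $F=0$ and $G=1$. Set $W=\sqrt{(1-u\kappa)^2+(u\tau)^2}$. Using $T\times N=B$ and $B\times N=-T$,
\[
F_s\times F_u=(1-u\kappa)B-u\tau T,\qquad n=\frac{-u\tau T+(1-u\kappa)B}{W}.
\]
This is valid at regular points, i.e.\ where $W\neq 0$.

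\textbf{Second-order data.} I differentiate once more and reduce every derivative of the frame with (\ref{F-S}):
\[
F_{uu}=0,\qquad F_{su}=N'=-\kappa T+\tau B,
\]
\[
F_{ss}=-u\kappa' T+\big((1-u\kappa)\kappa-u\tau^2\big)N+u\tau' B .
\]
Pairing with $n$ gives the second fundamental form coefficients:
\[
\mathcal{N}=\langle F_{uu},n\rangle=0,
\]
\[
M=\langle F_{su},n\rangle=\frac{u\kappa\tau+(1-u\kappa)\tau}{W}=\frac{\tau}{W},
\]
\[
L=\langle F_{ss},n\rangle=\frac{u^2\tau\kappa'+u(1-u\kappa)\tau'}{W}=\frac{u\left[\tau'+u(\kappa'\tau-\kappa\tau')\right]}{W}.
\]

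\textbf{Curvatures.} Since $F=0$, $G=1$ and $\mathcal{N}=0$, the standard formulas collapse:
\[
K=\frac{L\mathcal{N}-M^2}{EG-F^2}=-\frac{\tau^2}{W^4},\qquad
H=\frac{E\mathcal{N}+GL-2FM}{2(EG-F^2)}=\frac{L}{2W^2}.
\]
These give (\ref{G-C}) and (\ref{M-C}) respectively.

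\textbf{Expected obstacles.} The computation itself is routine. The only delicate points are bookkeeping ones. First, the sign of $H$ depends on the chosen orientation of $n$; I fix $n$ as $F_s\times F_u$ normalized, which reproduces the stated sign. Second, the formulas hold only at regular points $W\neq0$, which fail exactly when $\tau(s)=0$ and $u\kappa(s)=1$. For a rectifying curve, (\ref{R-C cond}) with $a_1\neq0$ forces $\tau$ to vanish at no more than one value of $s$, so this set is negligible. The main place an error could creep in is collecting the $N$-component of $F_{ss}$, but it is orthogonal to $n$ and drops out anyway.
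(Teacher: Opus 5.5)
Your proposal is correct and follows the paper's proof: the same direct computation of the fundamental forms, giving exactly the paper's $E=(1-u\kappa)^2+(u\tau)^2$, $F=0$, $G=1$, $l=u[\tau'+u(\kappa'\tau-\kappa\tau')]/\sqrt{E}$, $m=\tau/\sqrt{E}$, $n=0$ (with the same unit normal), substituted into the standard formulas for $K$ and $H$. Your caveat that the formulas need $W\neq 0$ is a correct refinement the paper's ``for all $u\in\mathbb{R}$'' glosses over: it excludes only the point $u=1/\kappa(s_0)$, where $s_0=-a_2/a_1$ is the single parameter value at which $\tau$ vanishes, if $s_0\in[0,L]$.
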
 
	
	\begin{proof}
		We remind the formulas for Gaussian curvature and mean curvature:
		\begin{equation*}
			K=\frac{ln-m^2}{EG-F^2}, \quad
			H=\dfrac{1}{2}\cdot\dfrac{lG-2mF+nE}{EG-F^2},
		\end{equation*}
		where $E, \; F, \; G$ are the coefficients of the first fundamental form and $l, \; m, \; n$ are the coefficients of the second fundamental form. 
		
		By direct computation, we obtain
		\[
		E=(1-u\kappa(s))^2+(u\tau(s))^2, \; F=0, \; G=1,
		\]
		\[
		l=\dfrac{u[\tau'(s)+u(\kappa'(s)\tau(s)-\kappa(s)\tau'(s))]}{\sqrt{E}}, \; m=\dfrac{\tau(s)}{\sqrt{E}}, \; n=0,
		\]
		where $ s \in [0,L], u \in \mathbb{R}$.
		Replacing into the curvature formulas mentioned above, we obtain what we were looking for. 
	\end{proof}
	\begin{corollary}
		All points of the ruled normal surface associated with a rectifying curve are hyperbolic ones.
	\end{corollary}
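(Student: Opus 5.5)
The plan is to read the sign of $K$ directly off the formula (\ref{G-C}) in the preceding Proposition, since a point of a surface is hyperbolic exactly when its Gaussian curvature is strictly negative. The denominator $\left[(1-u\kappa(s))^2+(u\tau(s))^2\right]^2$ is the square of $E=EG-F^2$, which must be strictly positive wherever the parametrization (\ref{R-N-S}) is regular. It therefore suffices to show that the numerator $\tau^2(s)$ never vanishes on $[0,L]$; then $K(s,u)<0$ at every point.

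\textbf{Step 1 (regularity of the denominator).} The quantity $E$ vanishes only if $u\kappa(s)=1$ and $u\tau(s)=0$ hold simultaneously. That forces $u\neq 0$ and $\tau(s)=0$. Once Step 2 shows $\tau(s)\neq 0$, we get $E>0$ everywhere, so the surface is regular and the formula (\ref{G-C}) is valid at all points.

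\textbf{Step 2 (the torsion does not vanish).} This is the only step with content, and the main obstacle. By Theorem \ref{Th R-C}, $\tau(s)=\kappa(s)(a_1s+a_2)$ with $a_1\neq 0$, and $\kappa(s)\neq 0$ is built into the setting, since the rectifying curve has a Fr\'enet frame. So $\tau$ vanishes only at the single value $s_0=-a_2/a_1$, if that value lies in $[0,L]$.

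I would handle this point using the representation (\ref{New R-C}). Taking the norm gives $\|\gamma(s)\|^2=(s+a_2/a_1)^2+1/a_1^2$. At $s_0$ we would have $\tau=0$, so $K=0$ there, which is a parabolic point. The statement therefore has to be read on the portion of the curve where $a_1s+a_2\neq0$, or with the convention that the interval is chosen to avoid $s_0$. I would say this explicitly. An alternative is to argue that $\gamma$ is generic, so the isolated parabolic ruling $s=s_0$ is excluded.

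\textbf{Conclusion.} Away from $s_0$ we have $\tau^2(s)>0$ and $E>0$, so $K(s,u)<0$ for all $(s,u)$. Hence every point of the ruled normal surface is hyperbolic.
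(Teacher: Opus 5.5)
Your argument is the paper's own: both read $K=-\tau^2/E^2$ off (\ref{G-C}) and reduce hyperbolicity to $\tau(s)\neq 0$. The paper, however, simply asserts that $\tau(s)\neq 0$ on $[0,L]$ for every rectifying curve. Your Step~2 correctly shows that this does not follow from Theorem~\ref{Th R-C}: $\tau=\kappa(a_1s+a_2)$ vanishes at $s_0=-a_2/a_1$ whenever $s_0\in[0,L]$.

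This genuinely occurs. Take $\kappa\equiv 1$, $\tau(s)=s-1$ on $[0,2]$. By the fundamental theorem of curves this defines a unit speed curve, and it is rectifying by Theorem~\ref{Th R-C}. Along the ruling $s=1$ of its normal surface, $K$ vanishes at every regular point. So the corollary as literally stated is false in that case. Your restricted version is the correct statement: it lives over the set where $a_1s+a_2\neq 0$ and includes your check that $E>0$ there, which the paper leaves implicit. The gap lies in the paper's proof rather than in yours.

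Three small clean-ups:
\begin{itemize}
\item The computation of $\|\gamma(s)\|^2$ plays no role and can be dropped.
\item The ``genericity'' alternative is not an argument, since nothing forces $s_0$ out of the interval. Simply state the hypothesis $a_1s+a_2\neq 0$ on $[0,L]$.
\item The ruling $s=s_0$ is not uniformly parabolic. Since $\tau'(s_0)=a_1\kappa(s_0)\neq 0$, the coefficients in the proof of the Proposition give $l=u\,a_1\kappa(s_0)\bigl(1-u\kappa(s_0)\bigr)/\sqrt{E}$ and $m=n=0$. Hence $u=0$ is a planar point, $u=1/\kappa(s_0)$ is a singular point of the surface ($E=0$), and the remaining points are parabolic. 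None is hyperbolic, which is all you need.
\end{itemize}
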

	\begin{proof}
		Because we have for any rectifying curve $\tau(s)\neq 0, \;\forall \; s \in [0,L] $, from formula (\ref{G-C}) it follows that the inequality $K(s,u)<0$, $\forall \; u\in \mathbb{R}$, $s \in [0,L]$, holds good.
	\end{proof}
	
	\begin{proposition} \label{Not minimal}
		The ruled normal surface associated with a rectifying curve is never minimal one.
	\end{proposition}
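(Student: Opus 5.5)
We argue by contradiction, starting from formula (\ref{M-C}) of the previous Proposition. Suppose the ruled normal surface (\ref{R-N-S}) is minimal, i.e. $H(s,u)=0$ for all $s\in[0,L]$ and $u\in\mathbb{R}$. The denominator in (\ref{M-C}) never vanishes (for $u\neq 0$ we have $E\geq u^2\tau^2>0$, since $\tau\neq 0$ for a rectifying curve). Hence minimality is equivalent to the polynomial identity in $u$
\[
u\,\tau'(s)+u^2\bigl(\kappa'(s)\tau(s)-\kappa(s)\tau'(s)\bigr)=0 \quad \forall\, u\in\mathbb{R}.
\]
Comparing coefficients, we get $\tau'(s)=0$ and $\kappa'(s)\tau(s)-\kappa(s)\tau'(s)=0$ for every $s$.

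The first condition says $\tau$ is a nonzero constant. Substituting into the second gives $\kappa'(s)\tau=0$, so $\kappa'\equiv 0$ and $\kappa$ is also constant (and nonzero, as assumed throughout). Then $\tau/\kappa$ is constant. Theorem \ref{Th R-C}, however, forces $\tau(s)/\kappa(s)=a_1s+a_2$ with $a_1\neq 0$, which is a nonconstant function of $s$ on the nondegenerate interval $[0,L]$. This is a contradiction, so $H$ cannot vanish identically.

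Equivalently, one can note directly that $(\tau/\kappa)'=(\tau'\kappa-\tau\kappa')/\kappa^2=a_1\neq 0$. The coefficient of $u^2$ in the numerator is therefore $-a_1\kappa^2(s)\neq 0$ for every $s$. Consequently, for each fixed $s$ the numerator is a nonzero quadratic polynomial in $u$ and vanishes for at most two values of $u$. This shows that $H\not\equiv 0$ even along any single ruling, which is a slightly stronger statement.

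The only point needing care is the meaning of ``minimal''. Here it means $H\equiv 0$ on an open set of the surface, or on the whole surface. The second argument handles every version at once, since it shows $H\neq 0$ off a set that meets each ruling in at most two points. I do not expect a real obstacle beyond checking that $\kappa\neq 0$ and $\tau\neq0$, which hold for rectifying curves by Theorem \ref{Th R-C} and the standing assumptions.
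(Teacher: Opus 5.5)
Your proof is correct, and its main line is the same as the paper's. Both compare the coefficients of $u$ and $u^2$ in the numerator of (\ref{M-C}) to get $\tau'=0$ and $\kappa'\tau-\kappa\tau'=0$, so $\tau/\kappa$ is constant, contradicting Theorem~\ref{Th R-C}. Your second observation, that the $u^2$-coefficient is $-a_1\kappa^2\neq0$, is a tidier variant: it needs no division by $\tau$ and gives the stronger statement on each ruling.

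One correction: Theorem~\ref{Th R-C} does not give $\tau\neq0$, since $\tau=\kappa(a_1s+a_2)$ vanishes at $s=-a_2/a_1$, as in the paper's own example. So $E$ can vanish at the single singular point where $\tau=0$ and $u=1/\kappa$. This is harmless, because the numerator identity extends there by continuity, and your second argument never uses $\tau\neq0$.
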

	
	\begin{proof}
		We suppose that the ruled normal surface associated with a rectifying curve is minimal, that is $H\equiv0$.
		From (\ref{M-C}), it follows that we obtain for all $s \in [0,L]$:
		$$\left\{ \begin{array}{l}
			\tau'(s)=0 \\
			\kappa'(s)\tau(s)-\kappa(s)\tau'(s)=0
		\end{array} \right. \Leftrightarrow
		\left\{ \begin{array}{l}
			\tau(s)=C_1, \; C_1\in \mathbb{R} \medskip\\
			\dfrac{\kappa'(s)}{\kappa(s)}=\dfrac{\tau'(s)}{\tau(s)}
		\end{array} \right. \Leftrightarrow	$$
		$$
		\Leftrightarrow \left\{ \begin{array}{l}
			\tau(s)=C_1, \; C_1 \in \mathbb{R} \medskip\\
			\kappa(s)=C_2\tau(s), \; C_2 >0.
		\end{array} \right.
		$$
		
		In conclusion, we infer that we have 
		$$\dfrac{\tau(s)}{\kappa(s)}= \mbox{constant},$$ which contradicts the hypothesis that the curve is rectifying.
	\end{proof}
	
	In the sequel, we provide a proper example of a rectifying curve, which relies on the finding of a particular solution for the differential equations system produced by the relation (\ref{Rec-Curve}).  
	\begin{example}
		The curve $\gamma : [0,L] \rightarrow \mathbb{R}^3, \; \gamma(s)=(x(s),y(s),z(s))$, defined by
		\begin{equation} \label{Ex 1}
			\begin{array}{l}
				x(s) =   \pm \dfrac{b_1}{a_1} \ln \left |(a_1s+a_2)+\sqrt{1+(a_1s+a_2)^2} \right | + C_3,  \medskip\\
				y(s) =  \pm \dfrac{b_2}{a_1} \ln \left |(a_1s+a_2)+\sqrt{1+(a_1s+a_2)^2} \right | + C_4,   \medskip\\
				z(s) =   \pm \dfrac{\sqrt{1+(a_1s+a_2)^2}}{a_1} + C_5, \\
			\end{array}
		\end{equation} 
		where $b_1, b_2 \in [-1,1], \; b_1^2+b_2^2=1$, $a_1, a_2, C_3, C_4, C_5 \in \mathbb{R}$ and $a_1 \neq 0$, has the following torsion and curvature:
		\begin{equation} \label{C-T}
			\tau(s) = \pm \dfrac{2a_1(a_1s+a_2)}{1+(a_1s+a_2)^2}, \quad \kappa(s) = \pm \dfrac{a_1}{1+(a_1s+a_2)^2}.
		\end{equation}
		In other words, we have $\tau(s)/\kappa(s)=2(a_1s+a_2)$, that is $\gamma(s)$ is a rectifying curve.
	\end{example}	
	
	In what follows, we prove the relations (\ref{C-T}).
	
	Taking to account that $T(s)=\gamma'(s)$ and $B(s)=\gamma'(s) \times \gamma''(s)/\kappa(s)$, we deduce that the rectifying curve (\ref{New R-C}) verifies the following differential equations system: 
	\[
	\left\{\begin{matrix}
		x(s) = \left ( s+\dfrac{a_2}{a_1} \right )x'(s)+\dfrac{y'(s)z''(s)-y''(s)z'(s)}{\kappa(s)a_1}  \\ \\ 
		y(s)  = \left ( s+\dfrac{a_2}{a_1} \right )y'(s)-\dfrac{x'(s)z''(s)-x''(s)z'(s)}{\kappa(s)a_1} \\ \\
		z(s)  = \left ( s+\dfrac{a_2}{a_1} \right )z'(s)+\dfrac{x'(s)y''(s)-x''(s)y'(s)}{\kappa(s)a_1},\\
	\end{matrix}\right.
	\]
	where $a_1, a_2 \in \mathbb{R}, \; a_1 \neq 0$
	and 
	\[
	(x'(s))^2 + (y'(s))^2 + (z'(s))^2  =   1,\quad \forall \; s \in [0,L].
	\]
	We are looking for a solution of the above system of the form
	\[
	\left\{\begin{array}{l}
		x'(s)= \cos \theta \cos \varphi  \\
		y'(s)=\cos \theta \sin \varphi \\
		z'(s)= \sin \theta,  \\
	\end{array}\right.
	\]
	where $\theta \in \left [ -\dfrac{\pi}{2}, \dfrac{\pi}{2} \right ]$, $\varphi \in [0, 2\pi]$.  
	By direct and laborious calculations, we get 
	\[
	z(s)=\left ( s+\dfrac{a_2}{a_1} \right )\sin \theta + \dfrac{\varphi' \cos^2\theta}{\kappa(s)a_1}.
	\]
	
	If we take $\varphi'(s)=0$, which means $\varphi(s)=\varphi_0 \in \mathbb{R}$, and we use the notations  
	\[
	\cos \varphi_0 = b_1, \; \sin \varphi_0 = b_2,
	\]
	where $b_1, b_2  \in [-1,1]$ and $b_1^2+b_2^2=1$, by computing the curvature  $\kappa(s)= \left\|\gamma''(s) \right\|= \left\| T'(s)\right\|$, we find
	\[
	\kappa(s)=\pm \theta'(s), \; \theta \in \left [ -\dfrac{\pi}{2}, \dfrac{\pi}{2} \right ].
	\]
	Now, using Theorem \ref{Th R-C}, we obtain 
	\[
	\theta(s)= \pm \arctan(a_1s+a_2) + C, 
	\]
	where $s \in [0,L], \; a_1, a_2, C \in  \mathbb{R}, \; a_1 \neq 0$. In conclusion, by direct computations, we find the formulas (\ref{C-T}).
	Consequently, by replacing these into the relations (\ref{G-C}) and (\ref{M-C}), we deduce that the curvatures associated with the ruled normal surface attached to the rectifying curve (\ref{Ex 1}) are described by:
	\begin{itemize}
		\item [1.] The \textit{Gaussian curvature}:
		\[
		K(s,u)=-\dfrac{4a_1^2(a_1s+a_2)^2[1+(a_1s+a_2)^2]^2}{\left\{[1+(a_1s+a_2)^2 \mp ua_1]^2+4u^2a_1^2(a_1s+a_2)^2 \right\}^2},
		\]
		\item [2.] The \textit{mean curvature}: 
		\[
		H(s,u)=ua_1^2\dfrac{(a_1s+a_2)^4-1 \pm \left\{ [2a_1-(a_1s+a_2)](a_1s+a_2)+1\right\}}{\left\{ [1+(a_1s+a_2)^2 \mp ua_1]^2 + 4u^2a_1^2(a_1s+a_2)^2\right\}^{3/2}},
		\]
	\end{itemize}
	where $s \in [0,L]$, $u \in \mathbb{R}$ $a_1, a_2 \in \mathbb{R}$ and $a_1 \neq 0$. \medskip

	The main results of the paper are the followings:
	\begin{theorem} \label{Rec not Bert}
		No curve can be simultaneously a rectifying curve and a Bertrand curve.
	\end{theorem}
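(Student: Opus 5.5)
The natural plan is to argue by contradiction, combining the two intrinsic characterizations stated earlier. Suppose a unit speed curve $\gamma$ with $\kappa(s)\neq 0$ is both rectifying and Bertrand. By Theorem \ref{Th R-C} there are constants $a_1\neq 0$ and $a_2$ with $\tau(s)=(a_1s+a_2)\kappa(s)$. By the Bertrand characterization of \cite{Lip} there are nonzero constants $A,B$ with $A\kappa(s)+B\tau(s)=1$. Substituting the first relation into the second gives
\[
\kappa(s)\bigl(A+B(a_1s+a_2)\bigr)=1 .
\]
This determines $\kappa$ and $\tau$ explicitly. The next step would be to derive a contradiction, either algebraically or by way of Proposition \ref{Not minimal}. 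The second route requires showing that a Bertrand curve has $H\equiv 0$ on its principal normal surface (\ref{Prin-Norm-Surf}). By (\ref{M-C}) this amounts to showing that $\tau'\equiv 0$ and $\kappa'\tau-\kappa\tau'\equiv 0$.

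I expect that step to be the main obstacle, and in fact I do not think it can be carried out. Carrying the computation through gives
\[
\kappa(s)=\frac{1}{A+B(a_1s+a_2)},\qquad \tau(s)=\frac{a_1s+a_2}{A+B(a_1s+a_2)} .
\]
These are smooth, and $\kappa>0$ on any interval where $A+B(a_1s+a_2)>0$. They satisfy $A\kappa+B\tau=1$ identically, and $\tau/\kappa=a_1s+a_2$ with $a_1\neq 0$. By the fundamental theorem of space curves, a unit speed curve with exactly this curvature and torsion exists. That curve is then rectifying by Theorem \ref{Th R-C} and Bertrand by the characterization of \cite{Lip}. So the elimination yields no contradiction.

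The minimality route fails for a related reason. The condition $A\kappa+B\tau=1$ does not force $\tau'=0$, so the principal normal surface of a Bertrand curve need not be minimal. The required implication "Bertrand $\Rightarrow H\equiv 0$" is false in general. For instance, a circular helix satisfies it, but the curve above does not.

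My proposal is therefore to first test the statement against this explicit pair $(\kappa,\tau)$ before attempting any proof. The computation indicates that Theorem \ref{Rec not Bert} cannot hold as stated. It could only be rescued by an extra hypothesis that excludes nonconstant torsion, such as requiring the Bertrand mate to have constant $\kappa$ and $\tau$. Under that hypothesis $\tau/\kappa$ would be constant, which contradicts $a_1\neq 0$ exactly as in the proof of Proposition \ref{Not minimal}.
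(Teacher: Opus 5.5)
Your conclusion is right: the statement is false as written, and your pair
\[
\kappa(s)=\frac{1}{A+B(a_1s+a_2)},\qquad \tau(s)=\frac{a_1s+a_2}{A+B(a_1s+a_2)},
\]
taken on an interval where $A+B(a_1s+a_2)>0$, is a genuine counterexample. This is exactly the pair whose existence the paper's final Corollary denies, and that denial contradicts the fundamental theorem of space curves.

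The paper's proof of Theorem~\ref{Rec not Bert} uses the minimality route you rejected. It quotes from Izumiya--Takeuchi that the principal normal surface of every Bertrand curve is minimal, then invokes Proposition~\ref{Not minimal}. The quoted step is where the proof breaks, and the paper's own formula (\ref{M-C}) shows why. $H\equiv 0$ forces $\tau'\equiv 0$ and $\kappa'\tau\equiv\kappa\tau'$, so $\gamma$ is a plane curve or a circular helix. This agrees with Catalan's theorem: the surface is then a plane or a helicoid. The Bertrand condition $A\kappa+B\tau=1$, by contrast, leaves one free function. So the paper's argument only shows that a rectifying curve is never a circular helix, which already follows from $\tau/\kappa=a_1s+a_2$ with $a_1\neq 0$.

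One point worth tightening: the paper's definition requires a non-degenerate Bertrand mate. You can make the counterexample self-contained instead of relying on the cited characterization.

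\begin{itemize}
\item Rename the Bertrand constants $\alpha=A$, $\beta=B$ to avoid a clash with the binormal $B(s)$, and set $\bar\gamma=\gamma+\alpha N$.
\item Then $\bar\gamma'=(1-\alpha\kappa)T+\alpha\tau B(s)=\tau\bigl(\beta T+\alpha B(s)\bigr)$.
\item Differentiating the unit tangent $\pm\bigl(\beta T+\alpha B(s)\bigr)/\sqrt{\alpha^2+\beta^2}$ gives $\pm(\beta\kappa-\alpha\tau)N/\sqrt{\alpha^2+\beta^2}$.
\item Hence $\bar N=\pm N$, and $\bar\gamma$ is regular and non-degenerate wherever $\tau\neq 0$ and $\beta\kappa\neq\alpha\tau$.
\item For your pair these conditions fail only at $a_1s+a_2=0$ and $a_1s+a_2=\beta/\alpha$.
\end{itemize}

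Restricting to a subinterval that avoids these two values gives a curve that is rectifying by Theorem~\ref{Th R-C} and Bertrand in the strict sense of the paper's definition.
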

	
	\begin{proof}
		From paper \cite{Izu-Take}, we know that the normal ruled surfaces attached to Bertrand curves are always minimal. According to Proposition \ref{Not minimal}, the normal ruled surfaces attached to rectifying curves cannot be minimal. We get a contradiction. 
	\end{proof}
	
	\begin{corollary}
		There are no curves having "a priori" the curvature and torsion given by
		\[
		\kappa(s)=\dfrac{1}{A+B(a_1s+a_2)}, \; \tau(s)=\dfrac{a_1s+a_2}{A+B(a_1s+a_2)},
		\]
		where $\forall \; s \in [0,L], A, B, a_1 \in \mathbb{R}^*$ and $a_2 \in \mathbb{R}$.
	\end{corollary}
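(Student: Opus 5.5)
The plan is to argue by contradiction and reduce the statement directly to Theorem \ref{Rec not Bert}. Suppose a unit speed curve $\gamma:[0,L]\rightarrow\mathbb{R}^3$ had curvature and torsion
\[
\kappa(s)=\dfrac{1}{A+B(a_1s+a_2)}, \qquad \tau(s)=\dfrac{a_1s+a_2}{A+B(a_1s+a_2)},
\]
with $A,B,a_1\neq 0$. Implicitly we assume $A+B(a_1s+a_2)\neq 0$ on $[0,L]$, so that $\kappa$ is smooth and nonvanishing; after possibly changing signs we may take it positive. I would then check that this single curve satisfies both characterizations recalled in Section 2.

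The first check is the rectifying condition. Dividing gives
\[
\dfrac{\tau(s)}{\kappa(s)}=a_1s+a_2 ,
\]
with $a_1\neq 0$. By Theorem \ref{Th R-C}, $\gamma$ is a rectifying curve. The second check is the Bertrand condition. Forming the linear combination gives
\[
A\kappa(s)+B\tau(s)=\dfrac{A+B(a_1s+a_2)}{A+B(a_1s+a_2)}=1
\]
for all $s\in[0,L]$, with $A,B$ nonzero constants. By the Bertrand criterion quoted from \cite{Lip}, $\gamma$ is a Bertrand curve. Hence $\gamma$ would be simultaneously rectifying and Bertrand, which Theorem \ref{Rec not Bert} forbids. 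Therefore no such curve exists.

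The computations are trivial, so the real weight lies elsewhere. By the fundamental theorem of curves, any smooth $\kappa>0$ and any smooth $\tau$ are realized by some unit speed curve, so the corollary is only as strong as Theorem \ref{Rec not Bert}, and that is the step I expect to be the main obstacle. Before relying on it, I would re-examine the chain of Proposition \ref{Not minimal} and the minimality claim from \cite{Izu-Take}. In particular, I would check whether the principal normal surface of a Bertrand curve is genuinely minimal, since the second fundamental form coefficient $l$ computed in the earlier proposition vanishes identically only when $\tau'=0$ and $\kappa'\tau=\kappa\tau'$. I would first test this on the explicit pair $\kappa,\tau$ above, computing $H$ from (\ref{M-C}) directly: $\tau'$ is not identically zero here, so I would try to confirm whether $H\not\equiv 0$ and whether this is consistent with the cited minimality result. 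If an inconsistency appears, the corollary would have to be restated rather than proved.
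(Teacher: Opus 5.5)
Your reduction (check $\tau/\kappa=a_1s+a_2$ and $A\kappa+B\tau=1$, then invoke Theorem~\ref{Rec not Bert}) is exactly the paper's proof. It cannot be completed, because the statement is false. The doubt you raise at the end is therefore the decisive point, not a side check.

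Your own appeal to the fundamental theorem of curves already refutes the corollary. Take $A=B=a_1=1$ and $a_2=2$, so that $\kappa(s)=1/(s+3)>0$ and $\tau(s)=(s+2)/(s+3)>0$ are smooth on $[0,L]$, and let $\gamma$ be a unit speed curve with these invariants. Write $B(s)$ for the binormal, to distinguish it from the constant $B=1$.
\begin{itemize}
\item Rectifying: $\gamma(s)-(s+2)T(s)-B(s)$ has derivative $(\tau-(s+2)\kappa)N=0$, so all rectifying planes pass through a fixed point.
\item Bertrand: $\kappa+\tau=1$. The mate $\bar\gamma=\gamma+N$ satisfies $\bar\gamma'=(1-\kappa)T+\tau B(s)=\tau\,(T+B(s))\neq 0$. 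Also $(T+B(s))'=(\kappa-\tau)N$ with $\kappa-\tau=-(s+1)/(s+3)\neq 0$, so $\bar\gamma$ is non-degenerate with $\bar N=\pm N$.
\end{itemize}
So $\gamma$ is simultaneously rectifying and Bertrand. This contradicts both the corollary and Theorem~\ref{Rec not Bert}, and the paper's proof inherits the same defect.

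The false ingredient is the claim, used in proving Theorem~\ref{Rec not Bert}, that the principal normal surface of a Bertrand curve is always minimal. The test you proposed with (\ref{M-C}) exposes it. With $D=A+B(a_1s+a_2)$ one finds $\tau'=a_1A/D^2$ and $\kappa'\tau-\kappa\tau'=-a_1/D^2$, hence
\[
H(s,u)=\frac{a_1\,u\,(A-u)}{2D^2\left[(1-u\kappa)^2+u^2\tau^2\right]^{3/2}} .
\]
This is not identically zero. It vanishes only along $u=0$ and $u=A$, i.e.\ along $\gamma$ and its Bertrand mate, which are the only curves $u=\mathrm{const}$ that are asymptotic on the surface.

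More generally, $A\kappa+B\tau=1$ gives $H=u(A-u)\tau'/(2AE^{3/2})$. So a Bertrand curve has a minimal principal normal surface only if $\tau$, and hence $\kappa$, is constant, i.e.\ a circular helix, whose principal normal surface is a helicoid. This agrees with Catalan's theorem.

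The inconsistency you anticipated therefore does occur, and the right outcome of your plan is this counterexample, not a proof. The most that Proposition~\ref{Not minimal} supports is the much weaker statement that a rectifying curve is never a circular helix.
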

	
	\begin{proof}
		A curve $\gamma(s)$ with above curvature and torsion are, at the same time, rectifying and Bertrand. According with Theorem \ref{Rec not Bert}, we conclude that there is no curve whose curvature and torsion can be expressed as shown above.  \medskip
	\end{proof}

	\noindent \textbf{Acknowledgements.} The author of this paper is indebted Professor Adela Mihai, whose very useful suggestions and comments helped to improve this research work.


	\noindent \textsc{Ana-Maria Boldeanu}\\[1mm]
	Transilvania University of Brasov\newline
	Department of Mathematics and Computer Science\newline
	Blvd. Iuliu Maniu, No. 50, 500091 Brasov, Romania.
	
	\noindent E-mail: ana.boldeanu@unitbv.ro
	
\end{document}